\theoremstyle{plain}
\newtheorem{thm}[subsection]{Theorem}
\theoremstyle{definition}
\newtheorem{rk}[subsection]{Remark}
\newtheorem{ex}[subsection]{Example}
\newtheorem{question}[subsection]{Question}
\numberwithin{equation}{section}
\newcommand{\ko}{{\mathcal O}}
\newcommand{\OO}{{\mathcal O}}
\newcommand{\C}{\mathbb{C}}
\begin{document}

\title [1-forms on complete intersection curve singularities]
{On 1-forms on isolated complete intersection curve singularities\\
[1.0ex]{\em \tiny{To the memory of Egbert Brieskorn}}}
\author[Alexandru Dimca]{Alexandru Dimca$^1$}
\address{Universit\'e C\^ ote d'Azur, CNRS, LJAD, France }
\email{dimca@unice.fr}

\author[Gert-Martin Greuel]{Gert-Martin Greuel}
\address{Department of Mathematics,
University of Kaiserslautern, Germany}
\email{greuel@mathematik.uni-kl.de}

\thanks{$^1$ This work has been supported by the French government, through the $\rm UCA^{\rm JEDI}$ Investments in the Future project managed by the National Research Agency (ANR) with the reference number ANR-15-IDEX-01.}

\subjclass[2010]{Primary 14H20; Secondary  14F10, 14H50, 32S05. }

\keywords{complete intersection, curve singularity, Tjurina number, Milnor number, delta invariant, normalization}

\begin{abstract} We collect some classical results about holomorphic 1-forms of a reduced complex curve singularity. They are used to study the pull-back of holomorphic 1-forms on an isolated complete intersection curve singularity under the normalization morphism. We wonder whether the Milnor number $\mu$ and the Tjurina number $\tau$ of any isolated plane curve singularity satisfy the inequality $3\mu <4\tau$.
\end{abstract}
 
\maketitle



\section{Introduction} 

Consider a reduced complex curve singularity $(X,0) \subset (\C^N,0)$, defined by an ideal 
$I \subset \ko_{\C^N,0}$, with $r = r(X,0)$ branches. Let $\nu:(\bar{X},\bar{0}) \to X,0)$ be the normalization,
 where $(\bar{X},\bar{0})$ is the multi-germ consisting of $r$ smooth branches. We set
\[
\begin{array}{lll}
\ko & := & \ko_{X,0} = \ko_{\C^N,0}/I, \text{ the local ring of the germ  }(X,0);\\ 
\bar{\ko} & := & \nu_*\ko_{\bar{X},\bar{0}}, \text { the direct image of the local ring of the multi-germ } 
(\bar X,\bar 0) ;\\
\Omega & :=  & \Omega^1_{\C^N,0} / I \Omega^1_{\C^N,0} + \ko_{\C^N,0} dI, \text { the holomorphic 1-forms on } (X,0); \\
\bar{\Omega} & :=  & \nu_* \Omega^1_{\bar X,\bar 0}, \text { the direct image of the holomorphic 1-forms on } 
(\bar X,\bar 0); \\
\omega & := & \omega_{X,0} = Ext^{N-1}_{\ko_{\C^N,0}}(\ko,  \Omega^N_{\C^N,0}),  \text { the dualizing module of } (X,0); \\
T \Omega  & :=  &  H^0_{\{0\}}(\Omega),  \text { the torsion submodule of the } \ko \text{-module } \Omega.
\end{array}
\]

Let $d: \ko \to \Omega$ be the exterior derivation. We have the following  maps
$$d\ko \to \Omega \to \bar{\Omega} \to \omega,$$
where $d\ko \to \Omega$ is the inclusion, $\Omega \to \bar{\Omega}$ is given by the pull-back of forms under the morphism $\nu$, and $\bar{\Omega} \to \omega$ is the inclusion, if we identify the dualizing module $\omega$ with the module of Rosenlicht's regular differential forms as explained in \cite{BG80}.
Then the maps $d\ko \to \Omega$ and $\bar{\Omega} \to \omega$ are clearly injective and $T\Omega$ is the kernel of the map $\Omega \to \bar{\Omega} $ (cf. \cite{BG80}). We write  $\bar{\Omega} / \Omega$ for the cokernel of the map $ \Omega \to \bar{\Omega}$ and similarly for the other maps. These objects give rise to the following numerical invariants:  
 
 \[
\begin{array}{lll}
m & := & mt(X,0), \text{ the multiplicity of } (X,0); \\
\delta  & := &  \delta(X,0) = \text{dim}_\C (\bar{\ko} / \ko), \text{ the delta-invariant of } (X,0);\\
\mu & := & \mu(X,0) = \text{dim}_\C (\omega / d\ko),  \text{ the Milnor number of } (X,0); \\
\lambda  & := &  \lambda (X,0) =  \text{dim}_\C (\omega / \Omega);\\
\tau'  & := &  \tau ' (X,0) =  \text{dim}_\C (T\Omega); \\
 \tau & := &  \tau (X,0) =  \text{dim}_\C (T^1_{X,0}),  \text{ the Tjurina number of } (X,0). \\
\end{array}
\]

Here  $T^1_{X,0}$ is the tangent space of the base space of the semiuniversal deformation of $(X,0)$. 
If $(X,0)$ is a plane curve singularity with $ I = \langle f \rangle $, then $\mu = \dim_\C (\ko/J_f)$ (the classical Milnor number, cf. \cite{BG80, M68}) and $\tau 
= \dim_\C (\ko/\langle f \rangle+J_f)$, where $ J_f$ is the Jacobian ideal generated by the partials of $f$.

The aim of this note is to prove the following.

\begin {thm} \label{thmA}
Let  $(X,0)$ be a reduced complete intersection curve singularity. Then
the following hold.
\begin {enumerate}
\item $\tau = \tau'  = \lambda \geq  \delta +m -r,$ 
\item $ \tau - \delta = \dim_\C (\bar{\Omega} / \Omega)$. In particular, one has the equality
$$\dim_\C (\bar{\Omega} / \Omega)=\delta - r + 1$$
if and only if the singularity $(X,0)$ is weighted homogeneous.
\item $\tau >  \mu / 2$ if $(X,0)$ is not smooth.
\end {enumerate}

\end {thm}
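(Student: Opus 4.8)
\emph{Strategy and inputs.} I will deduce parts (2) and (3) from part (1), so the real content is (1). Two classical facts will be used throughout: a reduced complete intersection curve $(X,0)\subset(\C^N,0)$, with $I=(f_1,\dots,f_{N-1})$, is a local complete intersection, hence Gorenstein, hence its dualizing module satisfies $\omega\cong\ko$ as $\ko$-modules; and the Milnor formula $\mu=2\delta-r+1$ for such curves (\cite{M68} for $N=2$, \cite{BG80} in general).

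\emph{The equalities $\tau=\tau'=\lambda$.} For a local complete intersection the conormal sequence $0\to I/I^2\to\Omega^1_{\C^N,0}\otimes\ko\to\Omega\to0$ is exact with $I/I^2$ free of rank $N-1$, so it is a length-one free resolution of $\Omega$; applying $\Hom_\ko(-,\ko)$ identifies $T^1_{X,0}$ with $Ext^1_\ko(\Omega,\ko)$. Since $\ko$ is Gorenstein of dimension $1$ we may replace $\ko$ by $\omega$ here, and Grothendieck local duality gives $Ext^1_\ko(\Omega,\omega)^\vee\cong H^0_{\{0\}}(\Omega)=T\Omega$, so $\tau=\dim_\C T\Omega=\tau'$. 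For $\tau=\lambda$ one computes the natural map $\Omega\to\bar\Omega\to\omega$ explicitly: writing $\omega=\ko\,\eta$ with $\eta$ the Gelfand--Leray form $dx_1\wedge\cdots\wedge dx_N/(df_1\wedge\cdots\wedge df_{N-1})$, this map sends $dx_j$ to $\pm\eta$ times the maximal minor of the Jacobian matrix of $(f_1,\dots,f_{N-1})$ obtained by deleting column $j$; its image is therefore the Jacobian ideal $\J\subset\ko$, its cokernel is $\ko/\J$ (which for $N=2$ is exactly the Tjurina algebra), and this cokernel has dimension $\tau$, whence $\lambda=\dim_\C(\ko/\J)=\tau$. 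These identifications are essentially in \cite{BG80}.

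\emph{The inequality in (1), and part (2).} Grothendieck duality for the finite morphism $\nu$ gives $\bar\Omega=\Hom_\ko(\bar\ko,\omega)$, so applying $\Hom_\ko(-,\omega)$ to $0\to\ko\to\bar\ko\to\bar\ko/\ko\to0$ yields $\dim_\C(\omega/\bar\Omega)=\dim_\C(\bar\ko/\ko)=\delta$. The image of $\Omega$ in $\bar\Omega$ is $\Omega/T\Omega$, so the inclusions $\Omega/T\Omega\hookrightarrow\bar\Omega\hookrightarrow\omega$ give $\lambda=\dim_\C(\omega/\Omega)=\delta+\dim_\C(\bar\Omega/\Omega)$; together with $\tau=\lambda$ this is the identity asserted in (2). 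For the lower bound, identify $\bar\Omega$ with $\prod_{i=1}^r\C\{t_i\}\,dt_i$, where $t_i$ is a uniformizer on the $i$-th branch, of multiplicity $m_i$. Every non-unit of $\ko$ restricts to the $i$-th branch with $t_i$-order $\ge m_i$, so every form in the image of $\Omega$ lies in $\prod_i t_i^{m_i-1}\C\{t_i\}\,dt_i$, a submodule of $\bar\Omega$ of colength $\sum_i(m_i-1)=m-r$ (using $m=\sum_i m_i$ for the multiplicity of a reduced curve). Hence $\dim_\C(\bar\Omega/\Omega)\ge m-r$ and $\lambda\ge\delta+m-r$. Finally, (2)'s identity shows $\dim_\C(\bar\Omega/\Omega)=\delta-r+1$ if and only if $\tau=2\delta-r+1=\mu$, and for a reduced complete intersection curve $\tau=\mu$ holds exactly when $(X,0)$ is weighted homogeneous (K.~Saito's theorem for plane curves, together with its extension to isolated complete intersection singularities).

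\emph{Proof of (3), and the main obstacle.} By (1), $\tau\ge\delta+m-r$, and substituting $2\delta=\mu+r-1$ gives $2\tau\ge\mu+2m-r-1$, i.e.\ $2\tau-\mu\ge2m-r-1$. Now $m=\sum_{i=1}^r m_i\ge r$, with equality precisely when every branch is smooth; if $(X,0)$ is singular, then either $r=1$ and the single branch is singular, so $m\ge2$ and $2m-r-1\ge1$, or $r\ge2$, so $2m-r-1\ge r-1\ge1$. In either case $2\tau-\mu\ge1$, hence $\tau>\mu/2$. The delicate step in this whole argument is part (1): $\tau=\tau'$ and the inequality $\lambda\ge\delta+m-r$ are reasonably direct, but establishing $\tau=\lambda$ in codimension greater than one (equivalently, that $\coker(\Omega\to\omega)$ has dimension $\tau$), as well as the Saito-type criterion used in (2), is where the work lies; once (1) is granted, (2) and (3) are immediate.
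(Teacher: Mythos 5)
Your argument is correct, and parts (2) and (3) are handled exactly as in the paper: (2) from the chain $\Omega/T\Omega\hookrightarrow\bar\Omega\hookrightarrow\omega$ together with $\dim_\C(\omega/\bar\Omega)=\delta$, and (3) by combining $\tau\ge\delta+m-r$ with $\mu=2\delta-r+1$. Where you genuinely diverge is the core equality $\tau=\tau'=\lambda$. The paper obtains it from the four-term exact sequence
$$0\to T\Omega\to\Omega/d\ko\to\omega/d\ko\to\omega/\Omega\to 0,$$
whose exactness and length count rest on the Gauss--Manin-type theorem of \cite{G75} that $\dim_\C(\Omega/d\ko)=\mu=\dim_\C(\omega/d\ko)$ and $d\ko\cap T\Omega=0$, with $\tau=\tau'$ quoted from \cite{G80}. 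You instead prove $\tau=\tau'$ directly by local duality applied to the conormal resolution (essentially the content of the quoted Satz of \cite{G80}, and the Gorenstein argument the paper records in its remark following the proof), and you get $\lambda=\tau$ by identifying the image of $\Omega\to\omega$ with $J_X\cdot\omega$ via the Gelfand--Leray generator, so that $\omega/\Omega\cong\ko/J_X$; this is precisely the mechanism behind Delphine Pol's alternative proof recorded in Remark \ref{rk0}, and it trades the Gauss--Manin input for the determinantal fact $\dim_\C(\ko/J_X)=\tau'$ (\cite[Proposition 1.11(iii)]{G75}), which for $N>2$ is not a definition and is the one step you assert rather than prove --- you rightly flag it as ``where the work lies.'' You also re-derive from scratch the quoted facts $\lambda\ge\delta+m-r$ (order estimates on pulled-back forms) and $\dim_\C(\omega/\bar\Omega)=\delta$ (duality for $\nu$), i.e.\ Lemma 6.1.2 and Proposition 1.2.1 of \cite{BG80}, making your write-up more self-contained. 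Net effect: same skeleton, but the hard input $\mu=\dim_\C(\Omega/d\ko)$ is replaced by duality plus the Jacobian-ideal description of $\omega/\Omega$; both routes ultimately lean on nontrivial classical results, just different ones.
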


In the second section we recall a number of classical results on isolated complete intersection singularities (due to the second author with several co-authors, and written partly in German), which are somewhat scattered in the literature and apparently not well known. We collect them here with reference to the original sources. 
In the third  section we give a quick proof of Theorem \ref{thmA} using the results quoted before and discuss its relations with similar results by Delphine Pol, see Remark  \ref{rk0}.
In the final section we discuss the possible values of the quotient
$\rho(X,0)=\mu(X,0)/ \tau(X,0)$ and ask whether $\rho(X,0)<4/3$ for any plane curve singularity.

\bigskip

We would like to thank Mathias Schulze for a useful remark, see Remark \ref{rk0}.

\section{The classical results} 
We start by recalling the following general result.
\begin {thm} \label{thm1} (\cite {BG80})\\ 
For a reduced curve singularity the following holds.
\begin {enumerate}
\item $\mu = 2\delta -r + 1$,
\item  $\mu \geq \lambda \geq \delta + m -r$,
\item  $\dim_\C (\Omega / d\ko) = \mu + \tau' - \lambda$,
\item $\dim_\C (\omega / \bar{\Omega}) = \delta$,
\item If $(X,0)$ is smoothable (e.g. if it is a complete intersection) then $ \tau' \geq  \lambda$, with equality iff  $ \mu = \dim_\C (\Omega / d\ko)$.
\end {enumerate}
\end {thm}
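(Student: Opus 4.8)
The plan is to deduce all five assertions from Grothendieck--Rosenlicht duality for reduced curve singularities together with a few elementary filtration arguments, isolating (5) as the one place where deformation theory is really needed; this is essentially the path of \cite{BG80}, of which I would only recall the key points.

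I would begin with (4). Applying $\Hom_\ko(-,\omega)$ to the exact sequence $0\to\ko\to\bar\ko\to\bar\ko/\ko\to 0$ and using duality for the finite morphism $\nu$ (so that $\Hom_\ko(\bar\ko,\omega)=\bar\Omega$), together with local duality (so that $\bar\ko/\ko$ is $\C$-dual to its first $\mathrm{Ext}$ into $\omega$) and the fact that $\omega$ is torsion free while $\bar\ko$ is Cohen--Macaulay, yields a short exact sequence $0\to\bar\Omega\to\omega\to(\bar\ko/\ko)^{\vee}\to 0$; hence $\dim_\C(\omega/\bar\Omega)=\delta$. For (1) I would use that on the smooth multi-germ $(\bar X,\bar 0)$ every holomorphic $1$-form is exact, so $\bar\Omega=d\bar\ko$ and therefore $\bar\Omega/d\ko\cong d\bar\ko/d\ko\cong\bar\ko/(\ko+\C^r)$, which has dimension $\delta-r+1$; running up the chain $d\ko\subseteq\bar\Omega\subseteq\omega$ and adding gives $\mu=\dim_\C(\omega/d\ko)=\delta+(\delta-r+1)=2\delta-r+1$.

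Next I would dispatch (2) and (3) using the image $\Omega_0:=\im(\Omega\to\omega)\cong\Omega/T\Omega$. Since a pulled-back exact $1$-form that vanishes on every branch already vanishes on $(X,0)$ (as $\ko\hookrightarrow\bar\ko$), one has $d\ko\cap T\Omega=0$, so in the filtration $d\ko\subseteq d\ko+T\Omega\subseteq\Omega$ the subquotients give $\dim_\C(\Omega/d\ko)=\tau'+\dim_\C(\Omega_0/d\ko)$; and since $d\ko\subseteq\Omega_0\subseteq\omega$ with $\dim_\C(\omega/d\ko)=\mu$, $\dim_\C(\omega/\Omega_0)=\lambda$, the middle term is $\mu-\lambda$, which is (3). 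The inequality $\mu\geq\lambda$ in (2) is then immediate from $d\ko\subseteq\Omega_0$; for $\lambda\geq\delta+m-r$ I would write $\lambda=\delta+\dim_\C(\bar\Omega/\Omega_0)$ (using (4)) and observe, branch by branch, that the image of $\Omega$ in $\Omega^1_{\bar X_i}\cong\C\{t_i\}\,dt_i$ is contained in $t_i^{m_i-1}\C\{t_i\}\,dt_i$, where $m_i=\min_j\mathrm{ord}_{t_i}(x_j\circ\nu)$ is the multiplicity of the $i$-th branch; this forces $\dim_\C(\bar\Omega/\Omega_0)\geq\sum_i(m_i-1)=m-r$, using additivity of multiplicity over the branches of a reduced curve.

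The remaining assertion (5) is the hard one and the only place smoothability enters. Here I would interpret $\Omega/d\ko$ as the first de Rham cohomology $H^1_{\mathrm{dR}}(X,0)$ of the germ and invoke upper semicontinuity of de Rham cohomology along a smoothing of $(X,0)$: on a smooth general fibre de Rham cohomology computes the topology, and the first Betti number of the Milnor fibre of a curve singularity is exactly $\mu$, so $\dim_\C(\Omega/d\ko)\geq\mu$; combined with (3) this gives $\tau'-\lambda\geq 0$, with equality iff $\dim_\C(\Omega/d\ko)=\mu$. The genuine obstacle is making this semicontinuity precise at the level of germs --- choosing a good representative and a Milnor-type fibration, passing to an algebraic model, and comparing the relative de Rham complex of the total space with that of the fibres --- which is the technical heart of \cite{BG80}.
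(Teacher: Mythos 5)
Your proposal is correct and follows the same route as the paper, which for this theorem consists entirely of citations to \cite{BG80}: your duality argument for (4), the computation $d\bar\ko/d\ko\cong\bar\ko/(\ko+\C^r)$ for (1), the filtration $d\ko\subseteq d\ko+T\Omega\subseteq\Omega$ together with $d\ko\subseteq\Omega_0\subseteq\bar\Omega\subseteq\omega$ for (2) and (3), and the reduction of (5) to $\dim_\C(\Omega/d\ko)\geq\mu$ via relative de Rham cohomology along a smoothing are precisely the arguments of Proposition 1.2.1, Lemma 6.1.2 and Section 6.1 of \cite{BG80}. The one step you leave as a black box --- the semicontinuity argument identifying $\mu$ with the first Betti number of the Milnor fibre of a smoothing --- is exactly the part the paper also delegates to Corollaries 6.1.4 and 6.1.6 of \cite{BG80}, so no gap arises relative to the paper's own standard of proof.
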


\begin {proof} All these claims are proved in  \cite {BG80}.
Indeed, (1) is Proposition 1.2.1, (2) is Lemma 6.1.2, (3) appears in the proof of Theorem 6.1.3, (4) in the proof of Proposition 1.2.1, while
(5) is Corollary 6.1.4 together with Corollary 6.1.6 of \cite {BG80}.
\end {proof}

When $(X,0)$ is a complete intersection, we have the following additional properties. Some of these results
are also reproduced in Looijenga's book \cite{L84}, see in particular  Section 8.C.
In the case of plane curves, the reader can also consult the introductory book \cite{W04}, in particular Section 11.6.

\begin {thm} \label{thm2} (\cite {G75} \cite {G80}), \cite {GMP85})\\ 
Let $(X,0)$ be a reduced complete intersection curve singularity. Then
\begin {enumerate}
\item $\mu = \dim_\C (\Omega / d\ko)$, $d\ko \cap T\Omega =0$,
\item $\tau = \tau' \leq \mu$,
\item $\tau =  \mu$ iff $(X,0)$ is quasihomogeneous.
\end {enumerate}
\end {thm}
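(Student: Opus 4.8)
The plan is to prove the three parts in turn, leaning on Theorem~\ref{thm1} and on two facts special to the complete intersection case. First, the identity $d\ko\cap T\Omega=0$ holds for every reduced curve germ: if $g\in\ko$ has $dg\in T\Omega=\ker(\Omega\to\bar\Omega)$, then $d(\nu^{*}g)=0$ in $\bar\Omega=\prod_{i=1}^{r}\Omega^{1}_{\C\{t_i\}}$, so $\nu^{*}g$ is constant along each branch and therefore $g\in\ko\cap\C^{r}\subset\bar\ko$; but this intersection is the diagonal $\C$, because $\ko\hookrightarrow\bar\ko$ (as $(X,0)$ is reduced) and the $r$ branches all pass through the single point $0$, so $g\in\C$ and $dg=0$. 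For the remaining equality $\mu=\dim_{\C}(\Omega/d\ko)$ of~(1) I would invoke Greuel's theorem \cite{G75} that the Milnor number of an isolated complete intersection curve singularity equals $\dim_{\C}(\Omega^{1}_{X,0}/d\ko_{X,0})$ --- the proof there takes a one-parameter smoothing $\pi\colon(\X,0)\to(S,0)$ with $X_t\simeq\bigvee_{\mu}S^{1}$, forms the coherent $\ko_S$-module $\pi_{*}\Omega^{1}_{\X/S}/d\pi^{*}\ko_S$, shows via the Gauss--Manin connection that it is $\ko_S$-free of rank $\mu$, and identifies its special fibre with $\Omega/d\ko$ and its generic fibre with $H^{1}(X_t;\C)$. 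Granting this, Theorem~\ref{thm1}(3) forces $\tau'=\lambda$, hence $\tau'=\lambda\le\mu$ by Theorem~\ref{thm1}(2); this is the inequality asserted in~(2).

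The equality $\tau=\tau'$ in~(2) I would prove homologically. For a complete intersection $I/I^{2}\cong\ko^{N-1}$ is free and the conormal sequence $0\to I/I^{2}\to\Omega^{1}_{\C^{N},0}\otimes_{\ko_{\C^{N},0}}\ko\to\Omega\to0$ is exact (left-exactness being a standard property of complete intersections), with middle term free of rank $N$ and left-hand map, up to transpose, the Jacobian matrix $J=(\partial f_i/\partial x_j)$; in particular $\pd_{\ko}\Omega\le1$. Applying $\Hom_{\ko}(-,\omega)$ to this length-one resolution gives $\mathrm{Ext}^{1}_{\ko}(\Omega,\omega)\cong\coker\!\bigl(\omega^{N}\xrightarrow{J}\omega^{N-1}\bigr)\cong T^{1}_{X,0}\otimes_{\ko}\omega$, since $T^{1}_{X,0}=\coker(\ko^{N}\xrightarrow{J}\ko^{N-1})$ and $(-)\otimes_{\ko}\omega$ is right exact; and a complete intersection being Gorenstein, $\omega\cong\ko$, so $\dim_{\C}\mathrm{Ext}^{1}_{\ko}(\Omega,\omega)=\tau$. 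On the other hand I would split the fundamental sequence $0\to T\Omega\to\Omega\xrightarrow{c}\omega\to\omega/\Omega\to0$ into $0\to T\Omega\to\Omega\to N\to0$ and $0\to N\to\omega\to\omega/\Omega\to0$ with $N=\Omega/T\Omega$; using $\mathrm{Ext}^{i}_{\ko}(-,\omega)=0$ for $i\ge2$ (since $\dim\ko=1$), $\mathrm{Ext}^{1}_{\ko}(\omega,\omega)=0$, and $\dim_{\C}(\omega/\Omega)=\lambda<\infty$, the two long exact sequences yield $\mathrm{Ext}^{1}_{\ko}(N,\omega)=0$ and hence $\mathrm{Ext}^{1}_{\ko}(\Omega,\omega)\cong\mathrm{Ext}^{1}_{\ko}(T\Omega,\omega)$, which by local duality over the Gorenstein ring $\ko$ is the $\C$-dual of $H^{0}_{\{0\}}(T\Omega)=T\Omega$, of dimension $\tau'$. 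Comparing the two computations gives $\tau=\tau'$, completing~(2).

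For~(3), the direction from quasihomogeneity to $\tau=\mu$ is a short Euler-field computation: choosing positive weights making the $f_i$ weighted homogeneous, the Euler field $E=\sum_j w_j x_j\,\partial_{x_j}$ satisfies $E(f_i)\in I$ and so descends to a derivation $\bar E$ of $\ko$, and all the modules in sight are graded in strictly positive degrees. For homogeneous $\eta\in\Omega$ of degree $k>0$, Cartan's formula gives $k\eta=L_{\bar E}\eta=d(\iota_{\bar E}\eta)+\iota_{\bar E}(d\eta)$; since $\Omega^{2}_{X,0}=\wedge^{2}\Omega$ is a torsion module (the germ being one-dimensional) and $\iota_{\bar E}$ is $\ko$-linear, $\iota_{\bar E}(d\eta)\in T\Omega$, whence $\eta\in d\ko+T\Omega$. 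So the finite-dimensional graded space $\Omega/(d\ko+T\Omega)$ vanishes, $\Omega=d\ko\oplus T\Omega$ (the sum direct by the first paragraph), and $\mu=\dim_{\C}(\Omega/d\ko)=\dim_{\C}T\Omega=\tau'=\tau$. The converse --- that $\tau=\mu$ forces $(X,0)$ to be quasihomogeneous --- is the hard direction and the step I expect to be the real obstacle: it is the complete-intersection analogue of K.~Saito's theorem for hypersurfaces, carried out in \cite{GMP85}, where the equality $\tau=\mu$ (equivalently: every $1$-form on $(X,0)$ is exact modulo torsion) is leveraged to produce a vector field tangent to $(X,0)$ whose linear part at $0$ can be normalized so as to have only positive eigenvalues, which then integrates to the weighted-homogeneous $\C^{*}$-action; the delicate part is handling the possibly degenerate linear part of that field.
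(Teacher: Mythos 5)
The paper itself gives no argument for Theorem \ref{thm2}: its ``proof'' consists of pointers to the original sources, namely \cite{G75} for (1), \cite{G80} for (2), and \cite{GMP85} (with \cite{KS71} for the plane-curve case) for (3). Your proposal is correct and is compatible with this, but it is genuinely more informative: you defer exactly the two deep ingredients --- the Gauss--Manin/relative de Rham argument behind $\mu=\dim_\C(\Omega/d\ko)$ and the hard converse in (3) --- to the same sources the paper cites, and you supply real proofs of everything else. Specifically: your argument that $d\ko\cap T\Omega=0$ for any reduced curve germ (a function whose differential dies in $\bar\Omega$ is constant on each branch, hence constant, since $\ko\hookrightarrow\bar\ko$ and all branches pass through $0$) is correct; your deduction $\tau'=\lambda\le\mu$ from Theorem \ref{thm1}(2),(3) once $\dim_\C(\Omega/d\ko)=\mu$ is granted is exactly the computation the paper itself performs in Section 3 to prove Theorem \ref{thmA}(1), so there is no circularity but there is overlap; your duality proof of $\tau=\tau'$ --- length-one free resolution of $\Omega$ from the conormal sequence, $\mathrm{Ext}^1_\ko(\Omega,\omega)\cong T^1_{X,0}$ on one side and $\mathrm{Ext}^1_\ko(\Omega,\omega)\cong\mathrm{Ext}^1_\ko(T\Omega,\omega)\cong\Hom_\C(T\Omega,\C)$ by local duality on the other --- is precisely the ``Gorenstein plus local duality'' mechanism the paper only alludes to in the remark following Remark \ref{rk0} and which is carried out in \cite{G80}; and the Euler-field computation $\Omega=d\ko\oplus T\Omega$ in the quasihomogeneous case is the standard proof of the easy direction of (3). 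The one step you should flag explicitly rather than call ``standard'' is the injectivity of $I/I^2\to\Omega^1_{\C^N,0}\otimes\ko$: it holds here because the kernel is a torsion submodule of the free module $I/I^2\cong\ko^{N-1}$ (the map is injective at the generic points of the reduced curve), but that is a one-line fix, not a gap. In short: correct, same skeleton as the literature the paper cites, with the elementary joints filled in.
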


\begin {proof} Indeed, (1) is Proposition 5.1, resp. Lemma 4.5 in \cite{G75} (for arbitrary positive dimensional isolated complete intersection singularities, resp. for complete intersections with arbitrary singularities, suitably modified), (2) is Satz 3.1(2a) in \cite{G80}. The claim (3) is Corollary 2.2 in \cite{GMP85} (where also a generalization to Gorenstein curves is proved), 
while the plane curve case goes back to K. Saito \cite{KS71}.
\end {proof}

\section{The proof of Theorem \ref{thmA}}

  The sequence
  \[
  0 \to T\Omega \to \Omega / d\ko \to \omega /d\ko  \to \omega / \Omega \to 0
  \]
is exact by Theorem \ref{thm2} (1) with dim$_{\C} (\Omega / d\ko) = \mu = $ dim${_\C} (\omega / d\ko)$. Hence $\tau' = $ dim$_{\C} (T\Omega)=$ dim$_{\C} ( \omega / \Omega)  = \lambda$. 
Claim (1) follows now from  Theorem \ref{thm2} (2) and  Theorem \ref{thm1} (2). 
The claim   (2) is a consequence of the exact sequence 
  \[
  0 \to \bar{\Omega} / \Omega \to \omega / \Omega \to \omega / \bar{\Omega} \to 0
  \]
  together with (1), Theorem \ref{thm1} (4) and the definition of $\lambda$. 
Using (1) and Theorem \ref{thm1} (1) we get  
$$\tau \geq  \delta +m -r = (\mu +r -1)/2 +m-r = \mu/2 +(m-1)/2 + (m-r)/2 >  \mu/2,$$ 
since $m\geq r$ and $m >1$ if $(X,0)$ is not smooth.

\begin{rk} \label{rk0}
It was drawn to our attention by Mathias Schulze that an alternative proof of the equality in Theorem \ref{thmA} (2) can be obtained from \cite[Proposition 3.31]{Pol14}. Assume that $(X,0)$ is irreducible for simplicity. 
Let 
 $f_1=\cdots =f_n=0$ be the equations for the germ $(X,0)$ in $(\C^{N},0)$, with $N=n+1$ and $f_i \in \OO_{\C^{N},0}$, for $i=1,...,n$.
Then  $\tau'$ is the codimension of the Jacobian ideal $J_X$ in $\OO$, where $J_X$ is the ideal of $\OO$, spanned by all the $n\times n $-minors in the Jacobian matrix $\left(\partial f_i/\partial x_j\right)_{i=1,n;j=0,n}$, see  \cite[Proposition 1.11(iii)]{G75}.
Delphine Pol shows  that one has the following equality  
$$J_X=g \cdot \frac{\nu^*(\Omega)}{dt},$$
in the local ring $\bar {\OO}=\C\{t\}$, where $g$ is a generator of the conductor ideal $C_X$.
Note that the codimension of $J_X$ in $\bar {\OO}$ is clearly by the above discussion $\tau+\delta$. Since $g$, regarded as an element of 
$\bar {\OO}=\C\{t\}$, has order $\mu$, it follows that the codimension of $g \cdot \frac{\nu^*(\Omega^1_{X,0})}{dt}$ in $\bar {\OO}$ is given by
$$ \mu+ \dim_\C( \bar{\Omega} / \Omega).$$
The claim follows from these formulas.  Note that both the proofs given, and the literature used, by Delphine Pol are quite different from ours.
\end{rk}

\begin {rk} \rm

(1) The equality $\tau = \tau'$ holds more generally if $(X,0)$ is Gorenstein, which follows from local duality. For an arbitrary reduced curve singularity $(X,0)$ the relation between $\tau$ and $\tau'$ is unclear. It is an old and still open question if for a non smooth $(X,0)$ we have always  $\tau > 0$  (i.e. $(X,0)$ is not rigid) and   $\tau' > 0$ (Berger's conjecture).

(2) For a plane curve singularity $(X,0)$ the expression $\tau - \delta$ appears also as the codimension of the extended tangent space to the orbit of the parametrization $(\bar{X},\bar 0) \to (\C^2,0)$ of $(X,0)$ by the action of the right-left group $\mathcal A$ of Mather (\cite [Proposition II.2.30(5)] {GLS07}).

\end {rk}

\section{A remark on the quotient $\mu/ \tau$}

Assume in this section that we are in the case of plane curve singularities, and we write $f_1=f$ to simplify our notation.
Let $M(f)=\OO_{\C^2,0}/J_f$ be the Milnor algebra of the singularity $(X,0)$, where $J_f$ denotes the Jacobian ideal of $f$. Let $\langle f \rangle $ denote the principal ideal spanned by $f$ in $M(f)$ and $\ker m_f$ denote the kernel of the morphism $m_f:M(f) \to M(f)$ given by the multiplication by $f$. Then we know that $\langle f \rangle \subset \ker m_f$, see
\cite{BrS74}. Moreover, one has $\dim_\C ( \langle f\rangle)=\mu-\tau$ and $\dim_\C  (\ker m_f) =\tau$.
Using this approach,  Yongqiang Liu has shown  in \cite{Li17} that 
$$\tau \geq \frac{1}{2}\mu.$$
He asked  there which values can take the quotient
$$\rho:=\rho(X,0)=\mu(X,0)/ \tau(X,0).$$
The obvious inequality $\tau \leq \mu$ and Theorem \ref{thmA} (3) show that 
$$1 \leq \rho(X,0)<2$$
when $(X,0)$ is  non smooth. It also shows that the inclusion of ideals $\langle f \rangle \subset \ker m_f$ is strict when $(X,0)$ is not  a smooth germ.

To construct singularities $(X,0)$ with a large quotient $\rho(X,0)$ is not easy, since the Tjurina number $\tau(X,0)$ is difficult to compute in general, e.g. since it is not a topological invariant it cannot be expressed in terms of Puiseux pairs. 

\begin{ex} \label{ex0}
There is a sequence of isolated plane curve singularities $(X_m,0)$ such that the sequence of rational numbers $\rho(X_m,0)$ is strictly increasing with limit $4/3$. Moreover, the singularities can be chosen to be all either irreducible, or consisting of smooth branches with distinct tangents.

In the irreducible case, consider the sequence of singularities
$$(X_m,0): f=x^{2m+1}+x^my^{m+1}+y^{2m}=0.$$
Then the associated projective plane curve of degree $d=2m+1$
$$ C: x^{2m+1}+x^my^{m+1}+y^{2m}z=0$$
is free with exponents $(d_1,d_2)=(m,m)$, see \cite[Theorem 1.1]{DSt17}.
This implies that 
$$\tau=\tau(X_m,0)=\tau(C)=(d-1)^2-d_1d_2=3m^2,$$
see \cite[Equation (2.2)]{DSt17}.
Since clearly $(X_m,0)$ is a semi-weighted homogeneous singularity, it follows that $\mu=\mu(X_m,0) =2m(2m-1)$, and hence the claim follows in this case.

In the case of singularities consisting of smooth branches with distinct tangents, consider the sequence
$$(X_m,0): f=x^{2m+1}+y^{2m+1}+x^{m+1}y^{m+1}.$$
Again $(X_m,0)$ is a semi-weighted homogeneous singularity, and from that we get
$\mu=\mu(X_m,0) =4m^2$.
To determine the Tjurina number, note that the monomials $x^ay^b$ for
$0 \leq a, b \leq 2m-1$ form a basis for the Milnor algebra $M(f)$.
The Euler formula implies that the monomial $x^{m+1}y^{m+1}$ belongs to the ideal $(f) \subset M(f)$. To get a basis for the Tjurina algebra
$T(f)=M(f)/(f)$ of $f$, we have to discard from the above basis all the multiples of $x^{m+1}y^{m+1}$, namely $(m-1)^2$ elements.
It follows that $\tau=\tau(X_m,0)=4m^2-(m-1)^2$, which yields the claim in this case as well.
\end{ex}

\begin{question} \label{q0}
Is it true that 
$$\rho(X,0)=\mu(X,0)/ \tau(X,0) < \frac{4}{3}$$
for any isolated plane curve singularity?
\end{question}


\begin{thebibliography}{00}






\bibitem[BrS74]{BrS74}  Brian\c con, J. ; Skoda, H.: {\em Sur la cl\^ oture int\' egrale d'un id\'eal de germes de fonctions holomorphes en un point de $\C^n$.} Zbl 0307.32007, C.R.A.S. 278 (1974), 949--951.

\bibitem[BG80]{BG80}  Buchweitz, R.-O.; Greuel, G.-M.: 
{\em The Milnor number and deformations of complex curve singularities.} Zbl 0458.32014,
Invent. Math. 58, 241-281 (1980). 



\bibitem[DSt17]{DSt17} Dimca, A. ; Sticlaru, G.: {\em On the exponents of free and nearly free projective plane curves},  Rev. Mat. Complut. 30(2017), 259--268.





\bibitem [Gr75]{G75} Greuel, G.-M.:
{\em Der Gau{\ss}-Manin-Zusammenhang isolierter Singularit\"aten von vollst\"andigen Durchschnitten.} Zbl 0285.14002,
Math. Ann. 214, 235-266 (1975). 

\bibitem[Gr80]{G80}  Greuel, G.-M.:
{\em Dualit\"at in der lokalen Kohomologie isolierter Singularit\"aten.} Zbl 0417.14003
Math. Ann. 250, 157-173 (1980). 

\bibitem[GMP85]{GMP85} Greuel, G.-M.; Martin, B.; Pfister, G.:
{\em Numerische Charakterisierung quasihomogener Gorenstein- Kurvensingularit\"aten.} Zbl 0587.14016,
Math. Nachr. 124, 123-131 (1985). 

 
 \bibitem[GLS07]{GLS07} Greuel, G.-M.; Lossen, C.; Shustin, E.:
{\em Introduction to Singularities and Deformations.} Zbl 1125.32013
Springer Monographs in Mathematics. Berlin, 471 p. (2007). 



\bibitem[Li17]{Li17} Liu, Yongqiang:
{\em Milnor and Tjurina Numbers for Hypersurface Germ with Isolated Singularity.} Preprint arXiv:1708.09716 (2017).

\bibitem[L84]{L84} Looijenga, E.: {\em Isolated Singular Points on
    Complete Intersections}. Zbl 0552.14002, London Math. Soc. Lecture Note Series,
  \textbf{77}, Cambridge Univ. Press, Cambridge (1984).


\bibitem[M68]{M68} Milnor,  J.: {\em Singular Points of Complex Hypersurfaces.} Zbl 0184.48405,  Ann. of Math. Studies, {\bf 61}, Princeton Univ. Press, Princeton, 1968.




\bibitem[Pol14]{Pol14} Pol, D.: {\em On the values of logarithmic residues along curves}, arXiv 1410.2126., Ann. Inst. Fourier, Grenoble (to appear).



\bibitem[KS71]{KS71} Saito, K.: {\em Quasihomogene isolierte Singularit\"aten von Hyperfl\"achen.} Zbl 0224.32011, Invent. Math., 14 (1971), 123--142.


\bibitem[W04]{W04} Wall, C.T.C.: {\em Singular Points of Plane Curves.} Zbl 1057.14001, London Math. Soc. Student Texts {\bf 63}, Cambridge Univ. Press, Cambridge (2004).



\end{thebibliography}
\end{document}